\newtheorem{thm}{Theorem}[section]
\newtheorem{prop}[thm]{Proposition}
\newtheorem{rmk}[thm]{Remark}
\newtheorem{res}[thm]{Result}
\numberwithin{equation}{section}
\newcommand{\f}{\Bbb F}
\newcommand{\x}{{\tt X}}
\newcommand{\rr}{{\tt r}}
\newcommand{\z}{{\tt z}}
\begin{document}

\title[Permutation Polynomials of the form $\x^r(a+\x^{2(q-1)})$]{Permutation Polynomials of the form $\x^r(a+\x^{2(q-1)})$ --- A Nonexistence Result}

\author[Xiang-dong Hou]{Xiang-dong Hou}
\address{Department of Mathematics and Statistics,
University of South Florida, Tampa, FL 33620}
\email{xhou@usf.edu}

\keywords{}

\subjclass[2000]{}

\begin{abstract}
Let $f=\x^r(a+\x^{2(q-1)})\in\f_{q^2}[\x]$, where $a\in\f_{q^2}^*$ and $r\ge 1$. The parameters $(q,r,a)$ for which $f$ is a permutation polynomial (PP) of $\f_{q^2}$ have been determined in the following cases: (i) $a^{q+1}=1$; (ii) $r=1$; (iii) $r=3$. These parameters together form three infinite families. For $r>3$ (there is a good reason not to consider $r=2$) and $a^{q+1}\ne 1$, computer search suggested that $f$ is not a PP of $\f_{q^2}$ when $q$ is not too small relative to $r$. In the present paper, we prove that this claim is true. In particular, for each $r>3$, there are only finitely many $(q,a)$, where $a^{q+1}\ne 1$, for which $f$ is a PP of $\f_{q^2}$. 
\end{abstract}

\maketitle

%%%%%%%%%%%%%%%%%%%%%%%%%%%%%%%%%%%%%%%%%%%%%%%%%%%%%%%%%%%%%%%%%%%%%%%%%
%             section 1 
%%%%%%%%%%%%%%%%%%%%%%%%%%%%%%%%%%%%%%%%%%%%%%%%%%%%%%%%%%%%%%%%%%%%%%%%%
\section{Introduction}

A polynomial $f\in\f_q[\x]$ is called a permutation polynomial (PP) of $\f_q$ if it induces a permutation of $\f_q$.  Permutation binomials over finite fields in general are far from being well understood \cite{Hou-FFA-2015, Hou-CM-2015}. However, significant progress has been made towards understanding the permutation properties of more specific types of binomials. In this paper, we are interested in the binomials over $\f_{q^2}$ of the form
\[
f_{q,r,t,a}=\x^r(a+\x^{t(q-1)}),
\] 
where $1\le r\le q^2-2$, $1\le t\le q$, $a\in\f_{q^2}^*$, as PPs of $\f_{q^2}$. Such binomials were investigated in several recent papers \cite{Zieve-arxiv-1310.0776, Hou-FFA-2015,Hou-Lappano-JNT-2015,Lappano-FFA-2015, Hou-Fq12, Lappano-Fq12}. The results in these references are summarized as follows.

\begin{res}\label{R1.1}\cite[Corollary~5.3]{Zieve-arxiv-1310.0776} When $a^{q+1}=1$, $f_{q,r,t,a}$ is a PP of $\f_{q^2}$ if and only if $\text{\rm gcd}(r,q-1)=1$, $\text{\rm gcd}(r-t,q+1)=1$, and $(-a)^{(q+1)/\text{\rm gcd}(q+1,t)}\ne 1$.
\end{res}

\begin{res}\label{R1.2}\cite{Hou-FFA-2015} $f_{q,1,2,a}$ is a PP of $\f_{q^2}$ if and only if $q$ is odd and $(-a)^{(q+1)/2}=-1$ or $3$. (Note that $(-a)^{(q+1)/3}=3$ implies that $\text{\rm char}\,\f_q\ne 3$.)
\end{res}

\begin{res}\label{R1.3}\cite{Hou-Lappano-JNT-2015} $f_{q,1,3,a}$ is a PP of $\f_{q^2}$ if and only if one of the following occurs: (i) $q=2^e$, $e$ odd, $a^{q+1}=1$, and $a^{(q+1)/3}\ne 1$. (ii) $(q,a)$ belongs to a finite set which is determined in \cite{Hou-Lappano-JNT-2015}.
\end{res}

\begin{res}\label{R1.4}\cite{Lappano-FFA-2015} For $q\ge 5$, $f_{q,1,5,a}$ is a PP of $\f_{q^2}$ if and only if one of the following occurs: (i) $q=2^{4k+2}$ and $a^{(q+1)/5}\ne 1$ is a $5$th root of unity . (ii) $(q,a)$ belongs to a finite set which is determined in \cite{Lappano-FFA-2015}.
\end{res}

\begin{res}\label{R1.5}\cite{Lappano-FFA-2015} For $q\ge 7$, $f_{q,1,7,a}$ is a PP of $\f_{q^2}$ if and only if $(q,a)$ belongs to a finite set which is determined in \cite{Lappano-FFA-2015}.
\end{res}

\begin{res}\label{R1.6}\cite{Hou-Fq12} Let $t>2$ be a fixed prime. Under the assumption that $a^{q+1}\ne 1$, there are only finitely many $(q,a)$ for which $f_{q,1,t,a}$ is a PP of $\f_{q^2}$. 
\end{res}

\begin{res}\label{R1.7}\cite{Lappano-Fq12} $f_{q,3,2,a}$ is a PP of $\f_{q^2}$ if and only if $q$ is odd, $q\not\equiv 1\pmod 3$, and $(-a)^{(q+1)/2}=-1$ or $1/3$. (Note that $(-a)^{(q+1)/2}=1/3$ implies that $\text{\rm char}\,\f_q\ne 3$.) 
\end{res}

The PPs in Results~\ref{R1.2} and \ref{R1.7} under the condition $(-a)^{(q+1)/2}=-1$ and those in Results~\ref{R1.3} -- \ref{R1.5} under the condition $a^{q+1}=1$ are covered by Result~\ref{R1.1}. When $a^{q+1}=1$, the polynomial $f_{q,r,t,a}$ behaves nicely on $\f_{q^2}$ as a piecewise defined function, which is the reason behind Result~\ref{R1.1}. The PPs $f_{q,1,2,a}$ in Result~\ref{R1.2} under the condition $(-a)^{(q+1)/2}=3$ and $f_{q,3,2,a}$ in Result~\ref{R1.7} under the condition $(-a)^{(q+1)/2}=1/3$ owe their existence to more subtle reasons. %A different proof of Results~\ref{R1.2} is provided by \cite{Zieve-ppt}. M. Zieve informed the author that he had proved Result~\ref{R1.7} around 2001 in his unpublished notes.

The class $f_{q,r,1,a}$, that is, $t=1$, appears to have been overlooked, at least in the literature. However, this is a relatively easy case, and later in Theorem~\ref{T4.2} of the present paper, we will prove the following: $f_{q,r,1,a}$ is a PP of $\f_{q^2}$ if and only if $\text{gcd}(r,q-1)=1$, $q+1\mid r-1$, and $a^{q+1}\ne 1$.

A necessary condition for $f_{q,r,t,a}$ to be a PP of $\f_{q^2}$ is that $\text{gcd}(r,q-1)=1$. If $p=\text{char}\,\f_q$ divides $t$, then $f_{q,r,t,a}(\x)\equiv f_{q,r',t/p,a}(\x^p)\pmod{\x^{q^2}-\x}$, where $1\le r'\le q^2-2$ is such that $r'p\equiv r\pmod{q^2-1}$. Moreover, we have $f_{q,r,t,a}(\x)=f_{q,r/d,t/d,a}(\x^d)$, where $d=\text{gcd}(r,t)$. Therefore, we may assume that 
\begin{equation}\label{1.1}
\text{gcd}(rp,t(q-1))=1.
\end{equation}
Another necessary condition for $f_{q,r,t,a}$ to be a PP of $\f_{q^2}$ is that $(-a)^{(q+1)/\text{gcd}(q+1,t)}\ne 1$. (Otherwise, $f_{q,r,t,a}$ has at least two roots in $\f_{q^2}$.)

The aforementioned results allow us to make some observations about the class $f_{q,r,t,a}$ as a whole. Under the assumptions that $\text{gcd}(rp,t(q-1))=1$ and $(-a)^{(q+1)/\text{gcd}(q+1,t)}\ne 1$, there are four infinite families of parameters $(q,r,t,a)$ for which $f_{q,r,t,a}$ is a PP of $\f_{q^2}$:

\begin{itemize}
  \item [(i)] $a^{q+1}=1$, $\text{gcd}(r-t,q+1)=1$;
  \item [(ii)] $t=1$, $q+1\mid r-1$;
  \item [(iii)] $r=1$, $t=2$, $(-a)^{(q+1)/2}=3$;
  \item [(iv)] $r=3$, $t=2$, $(-a)^{(q+1)/2}=1/3$.
\end{itemize} 
These are probably the only infinite families. It is likely true that for each given $(r,t)$ with either $r>3$ or $t>2$, there are only finitely many $(q,a)$ with $a^{q+1}\ne 1$ and $\text{gcd}(rp,t(q-1))=1$ for which $f_{q,r,t,a}$ is a PP of $\f_{q^2}$. This claim has been confirmed by Result~\ref{R1.6} for $r=1$ and $t>2$. In the present paper, we confirm the same for $r>3$ and $t=2$. The precise statement of the theorem and an outline of its proof are given in the next section.

Additional works by several authors on the topic that are in progress have not been indicated in the present paper; interested readers may find them in the near future.

{\bf Acknowledgments.} The author would like to thank Qiang Wang and Stephan Lappano for the discussions that partially motivated the work in the present paper.

%%%%%%%%%%%%%%%%%%%%%%%%%%%%%%%%%%%%%%%%%%%%%%%%%%%%%%%%%%%%%%%%%%%%%%%%%
%             section 2 
%%%%%%%%%%%%%%%%%%%%%%%%%%%%%%%%%%%%%%%%%%%%%%%%%%%%%%%%%%%%%%%%%%%%%%%%%
\section{Statement of the Theorem and Outline of the Proof}

%%%%%%%%%%%%%%%%%%%%%%%%%%%%%%%%%%%%%%%%%%%%%%
\subsection{Statement of the theorem}\

Let $f=f_{q,r,2,a}$, where $a\in\f_{q^2}^*$, $a^{q+1}\ne 1$, and assume, as in \eqref{1.1}, that $\text{gcd}(rp,2(q-1))=1$, where $p=\text{char}\,\f_q$, that is, $r$ and $q$ are both odd and $\text{gcd}(r,q-1)=1$. We will show that if $r>3$ and $q$ is not too small relative to $r$, then $f$ is not a PP of $\f_{q^2}$. More precisely, our main result is the following theorem.

\begin{thm}\label{T2.1} 
Let $f=f_{q,r,2,a}=\x^r(a+\x^{2(q-1)})$, where $r$ and $q$ are both odd, $r>3$, and $a\in\f_{q^2}^*$ is such that $a^{q+1}\ne 1$. Then $f$ is not a PP of $\f_{q^2}$ if 
\[
q\ge
\begin{cases}
r^2-4r+5&\text{if}\ r\equiv 3\pmod p,\cr
8r-15&\text{if $r\not\equiv 3\pmod p$ and either $p=3$ or $r\equiv 7/4\pmod p$},\cr
6r-11&\text{if $p>3$ and $r\not\equiv 3,\ 7/4\pmod p$}.
\end{cases}
\]
\end{thm}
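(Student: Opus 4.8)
The plan is to transfer the permutation question from $\f_{q^2}$ to the ``unit circle'' $\mu_{q+1}=\{\zeta\in\f_{q^2}:\zeta^{q+1}=1\}$, and then to \emph{break} the permutation property by producing a nontrivial collision, which is detected through the Hasse--Weil bound applied to an auxiliary curve whose genus I must control.

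First I would perform the standard multiplicative reduction. Writing $f=\x^r h(\x^{q-1})$ with $h=a+\x^2$, the Park--Lee / Akbary--Ghioca--Wang / Zieve criterion says that, given $\gcd(r,q-1)=1$, $f$ is a PP of $\f_{q^2}$ if and only if the map
\[
g(\zeta)=\zeta^r h(\zeta)^{q-1}=\zeta^{r-2}\,\frac{a^q\zeta^2+1}{\zeta^2+a}
\]
permutes $\mu_{q+1}$, where I have used $\zeta^q=\zeta^{-1}$ for $\zeta\in\mu_{q+1}$. The hypothesis $a^{q+1}\ne1$ (sharpened to $(-a)^{(q+1)/2}\ne1$) guarantees $\zeta^2+a\ne0$ on $\mu_{q+1}$, so $g$ is a well-defined self-map of $\mu_{q+1}$, and it suffices to show $g$ is not injective.

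Next I would set up the collision locus. Assuming for contradiction that $g$ permutes $\mu_{q+1}$, I seek $\zeta\ne\eta$ in $\mu_{q+1}$ with $g(\zeta)=g(\eta)$. Cross-multiplying and substituting $\zeta^q=\zeta^{-1}$, $\eta^q=\eta^{-1}$ turns $g(\zeta)=g(\eta)$ into a polynomial equation $F(\zeta,\eta)=0$ over $\f_{q^2}$, and the diagonal factor $\zeta-\eta$ divides $F$. Let $C$ be the curve cut out by $F/(\zeta-\eta)=0$ on the norm-one torus $T$ (so that $T(\f_q)=\mu_{q+1}$); its rational points lying off the diagonal, off the poles $\zeta^2=-a$, and away from infinity are precisely the nontrivial collisions I want. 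The crux is a genus estimate for $C$. The essential structural fact is that $g$ is the power map $\zeta^{r-2}$ \emph{twisted} by the fixed degree-two rational function $(a^q\zeta^2+1)/(\zeta^2+a)$: the untwisted collision locus $\zeta^{r-2}=\eta^{r-2}$ is a union of $r-2$ genus-zero lines, and it is the degree-two perturbation that governs the geometric genus of the components of $C$. I would compute this genus by analyzing the ramification of $g$ over $\mu_{q+1}$, equivalently the factorization and singularities of $F/(\zeta-\eta)$. Generically the genus stays small, so that the binding constraint is merely the number $O(r)$ of excluded trivial points and the resulting threshold is linear in $r$, accounting for the bounds $6r-11$ and $8r-15$; when $r\equiv3\pmod p$ the ramification structure degenerates, components coalesce and extra singularities appear, the genus rises to order $r^2$, and one is forced into the quadratic threshold $r^2-4r+5$. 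The residue $r\equiv7/4\pmod p$ and the characteristic $p=3$ are the remaining degenerate configurations, handled by a parallel but separate ramification analysis that yields the intermediate bound $8r-15$.

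Finally I would invoke the Hasse--Weil bound, in a form valid for possibly singular or reducible curves (e.g.\ Aubry--Perret), on the relevant absolutely irreducible component of $C$ defined over the base field: this gives at least $q+1-2g\sqrt q-O(1)$ rational points, and subtracting the $O(r)$ diagonal, pole, and infinite points forces a genuine off-diagonal collision once $q$ exceeds the stated bound, contradicting injectivity of $g$. I expect the main obstacle to be this genus computation: pinning down the geometric genus of $C$ sharply enough to obtain the precise \emph{linear} bounds, correctly identifying and discarding every trivial or reducible component, and carrying out the three separate degeneration analyses at $r\equiv3$, $r\equiv7/4\pmod p$, and $p=3$. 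A secondary but necessary point is to verify that the component producing the collision is defined over the base field and carries a smooth rational point outside the excluded set, so that Hasse--Weil indeed delivers an admissible pair $(\zeta,\eta)\in\mu_{q+1}^2$.
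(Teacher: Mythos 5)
Your reduction to the map $g(\zeta)=\zeta^{r-2}(a^q\zeta^2+1)/(\zeta^2+a)$ on $\mu_{q+1}$ is correct and standard, and looking for an off-diagonal collision is the right logical direction for a nonexistence proof. But the Hasse--Weil strategy has a quantitative gap that is fatal to the stated theorem: the lower bound $q+1-2g\sqrt q-O(r)$ is positive only when $q\gtrsim 4g^2$, and the collision locus $F(\zeta,\eta)/(\zeta-\eta)=0$ of a rational map of degree about $r$ is a plane curve of degree roughly $2r$, whose absolutely irreducible components generically have genus of order $r^2$. That forces $q\gtrsim r^4$; even under the (unproved and optimistic) assumption $g=O(r)$ you would only reach $q\gtrsim r^2$. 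The linear thresholds $6r-11$ and $8r-15$ in the theorem are therefore unreachable by this route: your claim that ``the binding constraint is merely the number $O(r)$ of excluded trivial points and the resulting threshold is linear in $r$'' ignores the term $2g\sqrt q$, which dominates the count unless $g=O(\sqrt r)$ --- something no ramification analysis of a degree-$r$ twisted power map will deliver. A second, independent gap is absolute irreducibility over the base field of the component you intend to count points on; you flag it as ``the main obstacle'' but do not resolve it, and for perturbed power maps $\zeta^{r-2}\cdot(\text{degree-}2\text{ twist})$ the collision curve routinely splits into many components none of which is obviously absolutely irreducible. Without that, Aubry--Perret gives nothing.

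The paper's proof is entirely different and is what makes the linear bounds possible. It uses the Hermite-type power-sum criterion: if $f$ is a PP then $S(\alpha)=\sum_{x\in\f_{q^2}}f(x)^{\alpha+(q-1-\alpha)q}=0$ for all odd $\alpha$, and Proposition~\ref{P3.1} evaluates $S(\alpha)$ in closed form as a short polynomial $A_\alpha(r,z)$ in $z=(-a)^{-q(q+1)/2}$, provided the integer $c=\lceil((\alpha+1)r-2\alpha)/(q+1)\rceil$ equals $1$. The hypotheses $q\ge 6r-11$, $q\ge 8r-15$, $q\ge r^2-4r+5$ are exactly the conditions guaranteeing $c=1$ for $\alpha=5$, $\alpha=7$, and $\alpha=p^l\le r-3$ respectively --- this is where the theorem's thresholds come from, not from any point-counting estimate. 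The contradiction is then extracted by computing resultants of $A_1,A_3,A_5$ (and $A_7$ or $A_{p^l}$ in the degenerate residue classes $r\equiv 3,\,7/4\pmod p$ and $p=3$) and showing they cannot simultaneously vanish. If you want to salvage your approach, you should expect to prove only a statement of the form ``$f$ is not a PP for $q\ge Cr^4$,'' and even that requires settling the irreducibility question you left open.
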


%%%%%%%%%%%%%%%%%%%%%%%%%%%%%%%%%%%%%%%%%%%%%%
\subsection{Outline of the proof}\

Among the power sums $\sum_{x\in\f_{q^2}}f(x)^s$, where $1\le s\le q^2-2$, the useful ones are those with $s=\alpha+(q-1-\alpha)q$, where $\alpha$ is odd and $1\le \alpha\le q-2$; the others are automatically $0$. The sum $S(\alpha)=\sum_{x\in\f_{q^2}}f(x)^{\alpha+(q-1-\alpha)q}$ is computed and the result can be made explicit for small values of $\alpha$. Assume to the contrary that $f$ is a PP of $\f_{q^2}$. We then exploit the consequence that $S(\alpha)=0$ for all odd $\alpha$ with $1\le \alpha\le q-2$. The consideration of $S(1)=S(3)=S(5)=0$ produces a contradiction except for a few special cases: $p=3$ or $r\equiv 3,\ 3/2,\ 7/4\pmod p$. The case $r=3/2$ requires minimum effort. When $r\equiv 3\pmod p$, we examine an additional equation $S(p^l)=0$ for a suitable $l$ to reach a contradiction. When $r\not\equiv 3\pmod p$ and either $p=3$ or $r\equiv 7/4\pmod p$, useful information is extracted from the equation $S(7)=0$ to settle the case.

The sum $S(\alpha)$ can be expressed as a polynomial in $r$ and $z=(-a)^{-q(q+1)/2}$ with coefficients in $\Bbb Q$. (More precisely, $S(\alpha)=\sum_ia_i(r)z^i$, where $a_i\in\Bbb Q[\rr]$ is such that $a_i(\Bbb Z)\subset\Bbb Z$.) Our approach relies on computations of resultants of polynomials; such computations are easily performed with various symbolic computation programs.

%%%%%%%%%%%%%%%%%%%%%%%%%%%%%%%%%%%%%%%%%%%%%%%
\subsection{Outline of the paper}\

In Section~3, we compute the power sum $\sum_{x\in\f_{q^2}}f_{q,r,2,a}(x)^s$. Section~4 is a brief detour to the case $t=1$. The power sum $\sum_{x\in\f_{q^2}}f_{q,r,1,a}(x)^s$ is obtained by an easy adaptation of the computation in Section~3. The result allows us to determine the necessary and sufficient conditions on $(q,r,a)$ for $f_{q,r,1,a}$ to be a PP of $\f_{q^2}$. The remaining three sections constitute the proof of Theorem~\ref{T2.1}, in three cases: Section~5: $p>3$ and $r\not\equiv 3,\ 7/4\pmod p$; Section~6: $r\equiv 3\pmod p$; Section~7: $r\not\equiv 3\pmod p$ and either $p=3$ or $r\equiv 7/4\pmod p$.

Throughout the paper, letters in the typewriter font $\x,\rr,\z$ always denote indeterminates. If the primary use of a polynomial $A$ is its values $A(r)$ for a parameter $r$, then the indeterminate of $A$ is designated as $\rr$. The characteristic of $\f_q$ is always denoted by $p$.

%%%%%%%%%%%%%%%%%%%%%%%%%%%%%%%%%%%%%%%%%%%%%%%%%%%%%%%%%%%%%%%%%%%%%%%%%
%             section 3 
%%%%%%%%%%%%%%%%%%%%%%%%%%%%%%%%%%%%%%%%%%%%%%%%%%%%%%%%%%%%%%%%%%%%%%%%%
\section{Power Sums}

Assume that $q$ is odd. Write $f=f_{q,r,2,a}=\x^r(a+\x^{2(q-1)})$, where $r\ge 1$, $\text{gcd}(r,q-1)=1$, and $a\in\f_{q^2}^*$. For $1\le s\le q^2-2$, written in the form $s=\alpha+\beta q$, $0\le \alpha,\beta\le q-1$, we have
\begin{equation}\label{3.1}
\begin{split}
\sum_{x\in\f_{a^2}}f(x)^s\,&=\sum_{x\in\f_{q^2}^*}x^{r(\alpha+\beta q)}(a+x^{2(q-1)})^{\alpha+\beta q}\cr
&=\sum_{x\in\f_{q^2}^*}x^{r(\alpha+\beta q)}\sum_{i,j}\binom\alpha i\binom\beta j x^{2(q-1)(i+jq)}a^{\alpha+\beta q-(i+jq)}\cr
&=\sum_{i,j}\binom\alpha i\binom\beta j a^{\alpha+\beta q-(i+jq)}\sum_{x\in\f_{q^2}^*}x^{r(\alpha+\beta q)+2(q-1)(i-j)}.
\end{split}
\end{equation}
The inner sum in the above is $0$ unless $\alpha+\beta q\equiv 0\pmod {q-1}$, i.e., $\alpha+\beta=q-1$.

Assume that $\alpha+\beta=q-1$. Since $\alpha+\beta q\equiv(\alpha+1)(1-q)\pmod{q^2-1}$, \eqref{3.1} becomes
\begin{equation}\label{3.2}
-\sum_{x\in\f_{a^2}}f(x)^s=\sum_{2(i-j)-(\alpha+1)r\equiv0\, \text{(mod\,$q+1$)}}\binom\alpha i\binom{q-1-\alpha}ja^{(\alpha+1)(1-q)-(i+jq)}.
\end{equation}
The above sum is $0$ unless $\alpha$ is odd. We assume that $\alpha$ is odd. Write
\begin{equation}\label{3.3}
(\alpha+1)r-2\alpha=c(q+1)-d,\quad 0\le d<q+1,\quad c=\left\lceil\frac{(\alpha+1)r-2\alpha}{q+1}\right\rceil.
\end{equation}
We claim that the conditions $0\le i\le \alpha$, $0\le j\le q-1-\alpha$ and $2(i-j)-(\alpha+1)r\equiv 0\pmod{q+1}$ together imply that $2(i-j)-(\alpha+1)r\in\{-c(q+1),-(c+1)(q+1)\}$. In fact, we have 
\[
2(i-j)-(\alpha+1)r\le 2\alpha-(\alpha+1)r=-c(q+1)+d<(-c+1)(q+1)
\]
and
\[
\begin{split}
2(i-j)-(\alpha+1)r\,&\ge -2(q-1-\alpha)-(\alpha+1)r=-2(q-1)-c(q+1)+d\cr
&=(-c-2)(q+1)+4+d>(-c-2)(q+1).
\end{split}
\]
Therefore \eqref{3.2} becomes
\begin{equation}\label{3.4}
\begin{split}
-\sum_{x\in\f_{a^2}}f(x)^s\,&=\sum_{2(i-j)-(\alpha+1)r=-c(q+1),-(c+1)(q+1)}\binom\alpha i\binom{q-1-\alpha}ja^{(\alpha+1)(1-q)-(i+jq)}\cr
&=\sum_{\substack{2(i-j)-(\alpha+1)r=-c(q+1),-(c+1)(q+1)\cr -\alpha\le j\le q-1}}\binom\alpha i\binom{\alpha+j}\alpha (-1)^ja^{(\alpha+1)(1-q)-(i+jq)}\cr
&=a^{(\alpha+1)(1-q)}\sum_i\binom\alpha ia^{-i}\sum_{\substack{j=i-\alpha+\frac d2,\,i-\alpha+\frac d2+\frac{q+1}2\cr j\le q-1}}\binom{\alpha+j}\alpha(-1)^ja^{-jq}.
\end{split}
\end{equation}
(In the second line of \eqref{3.4}, the condition $-\alpha\le j\le q-1$ is needed for the identity $\binom{q-1-\alpha}j=\binom{-1-\alpha}j=\binom{\alpha+j}\alpha(-1)^j$.) 

Recall from \eqref{3.3} that $d$ is even and $0\le d<q+1$. Therefore for $0\le i\le \alpha$, the inequality $i-\alpha+\frac d2+\frac{q+1}2\ge q$ holds if and only if $i=\alpha$ and $d=q-1$.

We first assume that $d\ne q-1$. Then \eqref{3.4} becomes 
\begin{equation}\label{3.5}
\begin{split}
&-\sum_{x\in\f_{q^2}}f(x)^s\cr
=\,&a^{(\alpha+1)(1-q)}\sum_i\binom\alpha i a^{-i}\biggl[\binom{i+\frac d2}\alpha(-1)^{i+\frac d2+1}a^{-q(i-\alpha+\frac d2)}\cr
&\kern 3.5cm +\binom{i+\frac d2+\frac 12}\alpha (-1)^{i+\frac d2+1+\frac{q+1}2}a^{-q(i-\alpha+\frac d2+\frac{q+1}2)}\biggr]\cr
=\,&(-1)^{\frac d2+1}a^{\alpha+1-q(1+\frac d2)}\sum_i \binom\alpha i(-1)^i\biggl[\binom{i+\frac d2}\alpha a^{-i(q+1)}\cr
&\kern 3.5cm +\binom{i+\frac d2+\frac 12}\alpha(-1)^{\frac{q+1}2}a^{-i(q+1)-\frac 12q(q+1)}\biggr].
\end{split}
\end{equation}
Setting
\begin{equation}\label{3.6}
z=(-a)^{-\frac 12q(q+1)},
\end{equation}
\eqref{3.5} becomes
\begin{equation}\label{3.7}
\sum_{x\in\f_{q^2}}f(x)^s=(-1)^{\frac d2}a^{\alpha+1-q(1+\frac d2)}\sum_i\binom\alpha i(-1)^i\biggl[\binom{i+\frac d2}\alpha z^{2i}+\binom{i+\frac d2+\frac 12}\alpha z^{2i+1}\biggr].
\end{equation}

Now assume that $d=q-1$. The computation from \eqref{3.4} to \eqref{3.7} holds after the term with $i=\alpha$ and $j=\frac d2+\frac{q+1}2=q$ is subtracted from the sum, that is 
\begin{equation}\label{3.8}
\begin{split}
\sum_{x\in\f_{q^2}}f(x)^s\,&=(-1)^{\frac d2}a^{\alpha+1-q(1+\frac d2)}\biggl[\sum_i\binom\alpha i(-1)^i\biggl(\binom{i-\frac 12}\alpha z^{2i}+\binom{i}\alpha z^{2i+1}\biggr)-z^{2\alpha+1}\biggr]\cr
&=-a^{\alpha+1}z\sum_i\binom\alpha i\binom{i-\frac 12}\alpha(-1)^iz^{2i}.
\end{split}
\end{equation}

To summarize, we have proved the following 
\begin{prop}\label{P3.1}
Let $q$ be odd and $\text{\rm gcd}(r,q-1)=1$. For $1\le s\le q^2-2$, written in the form $s=\alpha+\beta q$, where $0\le \alpha,\beta\le q-1$, we have

\begin{equation}\label{3.9}
\begin{split}
&\sum_{x\in\f_{q^2}}f(x)^s\cr
=\,&
\begin{cases}
\displaystyle (-1)^{\frac d2}a^{\alpha+1-q(1+\frac d2)}\sum_i\binom\alpha i(-1)^i\biggl[\binom{i+\frac d2}\alpha z^{2i}+\binom{i+\frac d2+\frac 12}\alpha z^{2i+1}\biggr]\cr
\kern5.95cm \text{if $\alpha$ is odd, $\alpha+\beta=q-1$, and $d\ne q-1$},\vspace{2mm}\cr
\displaystyle -a^{\alpha+1}z\sum_i\binom\alpha i\binom{i-\frac 12}\alpha(-1)^iz^{2i}\kern 0.8cm \text{if $\alpha$ is odd, $\alpha+\beta=q-1$, and $d=q-1$},\vspace{2mm}\cr
0\kern 5.75cm \text{otherwise},
\end{cases}
\end{split}
\end{equation}
where $d$ and $z$ are given in \eqref{3.3} and \eqref{3.6}, respectively.
\end{prop}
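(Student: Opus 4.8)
The plan is to evaluate $\sum_{x\in\f_{q^2}}f(x)^s$ head-on, by expanding the $s$-th power and reducing everything to elementary sums of powers of $x$ over $\f_{q^2}^*$. Writing $s=\alpha+\beta q$, I would expand $f(x)^s=x^{r(\alpha+\beta q)}(a+x^{2(q-1)})^{\alpha+\beta q}$, splitting the second factor as $(a+x^{2(q-1)})^\alpha\bigl[(a+x^{2(q-1)})^\beta\bigr]^q$ and pushing the $q$-th power through the expansion via the Frobenius (the binomial coefficients lie in $\f_p$ and are fixed); this produces the double sum in \eqref{3.1}. After interchanging the order of summation the inner sum is $\sum_{x\in\f_{q^2}^*}x^{N}$ with $N=r(\alpha+\beta q)+2(q-1)(i-j)$, where I have used $(q-1)q\equiv-(q-1)\pmod{q^2-1}$ to turn the exponent $i+jq$ into $i-j$. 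The orthogonality fact $\sum_{x\in\f_{q^2}^*}x^{N}=-1$ if $(q^2-1)\mid N$ and $0$ otherwise kills all but the terms with $N\equiv0$; since $\gcd(r,q-1)=1$ this forces $\alpha+\beta\equiv0\pmod{q-1}$, and the exclusions $s\ne 0,\,q^2-1$ leave only $\alpha+\beta=q-1$.

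Specializing to $\alpha+\beta=q-1$, I would use $\alpha+\beta q\equiv(\alpha+1)(1-q)\pmod{q^2-1}$ to fix the exponent of $a$, and factor $q-1$ out of $N$ so that the divisibility condition collapses to the congruence $2(i-j)\equiv(\alpha+1)r\pmod{q+1}$ of \eqref{3.2}. At this point the parity dichotomy is free: $q$ odd gives $q+1$ even, and $\gcd(r,q-1)=1$ with $q-1$ even forces $r$ odd, so $(\alpha+1)r$ is odd exactly when $\alpha$ is even; then $2(i-j)-(\alpha+1)r$ is odd and cannot be divisible by the even number $q+1$. Hence the index set is empty and the sum is $0$ unless $\alpha$ is odd, which is the ``otherwise'' branch of \eqref{3.9}.

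Assuming $\alpha$ odd, I would introduce $c,d$ as in \eqref{3.3} and carry out the key localization step: over the admissible ranges $0\le i\le\alpha$ and $0\le j\le q-1-\alpha$, a two-sided estimate shows $2(i-j)-(\alpha+1)r$ lies in an interval short enough to contain only the two multiples $-c(q+1)$ and $-(c+1)(q+1)$ of $q+1$. Solving the two resulting equations writes $j$ as $i-\alpha+\tfrac d2$ or $i-\alpha+\tfrac d2+\tfrac{q+1}2$, and I would then replace $\binom{q-1-\alpha}{j}$ by $(-1)^j\binom{\alpha+j}{\alpha}$ using the negative-upper-index identity, valid on the range $-\alpha\le j\le q-1$. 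Collecting the powers of $a$ with the help of $a^{q^2}=a$ (so $a^{q(q+1)}=a^{q+1}$, and the accompanying sign is trivial since $q(q+1)$ is even) and setting $z=(-a)^{-\frac12 q(q+1)}$ turns $a^{-i(q+1)}$ into $z^{2i}$ and the half-shifted term into $z^{2i+1}$, producing \eqref{3.5}--\eqref{3.7}.

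The last step, and what I expect to be the main obstacle, is the boundary analysis together with the characteristic-$p$ interpretation of the (now generalized, $\Bbb Q$-valued) binomial coefficients. Since $d$ is even with $0\le d<q+1$, the larger solution $j=i-\alpha+\tfrac d2+\tfrac{q+1}2$ can violate $j\le q-1$, and a short check isolates the unique offending case $i=\alpha$ and $d=q-1$. For $d\ne q-1$ no term is spurious and \eqref{3.7} stands as is. For $d=q-1$ I would reduce the coefficients in characteristic $p$ (e.g.\ $\binom{i+\frac{q-1}2}{\alpha}\equiv\binom{i-\frac12}{\alpha}$ and $\binom{i+\frac q2}{\alpha}\equiv\binom{i}{\alpha}$), subtract the single out-of-range term indexed by $i=\alpha,\ j=q$, and use $\binom{i}{\alpha}=0$ for $i<\alpha$ to collapse the remaining contributions into the compact form \eqref{3.8}. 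The delicate bookkeeping is exactly here: tracking which pairs $(i,j)$ genuinely contribute, keeping the signs straight through the $z$-substitution, and justifying the $p$-integral reductions of $\binom{\x}{\alpha}$ when $\alpha\ge p$, where the coefficient is no longer a polynomial function of the argument modulo $p$. Once these are organized, the three branches of \eqref{3.9} follow.
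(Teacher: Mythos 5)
Your proposal follows the paper's proof essentially step for step: the same $\binom{\alpha}{i}\binom{\beta}{j}$ expansion and orthogonality reduction to $\alpha+\beta=q-1$, the same parity argument forcing $\alpha$ odd, the same two-sided localization of $2(i-j)-(\alpha+1)r$ to $\{-c(q+1),-(c+1)(q+1)\}$, the same negative-upper-index identity $\binom{q-1-\alpha}{j}=(-1)^j\binom{\alpha+j}{\alpha}$, the same $z$-substitution, and the same isolation of the boundary term $i=\alpha$, $d=q-1$. The one point you flag as the main obstacle --- justifying the replacement of $\binom{i+\frac d2+\frac{q+1}2}{\alpha}$ by $\binom{i+\frac d2+\frac12}{\alpha}$ in characteristic $p$ when $\alpha\ge p$ --- is performed silently in the paper as well, and it does hold for all $\alpha\le q-1$: by Vandermonde the difference is a sum of terms $\binom{y}{k}\binom{q/2}{\alpha-k}$ with $\alpha-k\ge 1$, each of $p$-adic valuation at least $e-\lfloor\log_p(\alpha-k)\rfloor\ge 1$ since $\alpha<q=p^e$.
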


\begin{rmk}\label{R3.2}\rm
\begin{itemize}
  \item [(i)] In \eqref{3.3}, $d=q-1$ if and only if $(r-2)(\alpha+1)=(c-1)(q+1)$. The equation $(r-2)(\alpha+1)=(c-1)(q+1)$ has a solution $(\alpha,c)\in\Bbb Z^2$ with $1\le \alpha+1\le q$ if and only if $\text{gcd}(r-2,q+1)>1$.
\medskip  

  \item [(ii)] The PPs in Results~\ref{R1.2} and \ref{R1.7} corresponding to $(r,z)=(1,1/3)$ and $(3,3)$ are quite nontrivial. When $(r,z)=(1,1/3)$, we have $d=\alpha-1$. In this case, we know that for all odd $\alpha>0$, the identity
\begin{equation}\label{3.10}
\sum_i\binom\alpha i(-1)^i\biggl[\binom{i+\frac{\alpha-1}2}\alpha\Bigl(\frac 13\Bigr)^{2i}+\binom{i+\frac\alpha 2}\alpha\Bigl(\frac 13\Bigr)^{2i+1}\biggr]=0
\end{equation}
holds in $\Bbb Q$; see \cite{Hou-JNT-2013}. When $(r,z)=(3,3)$, $d=q-2-\alpha$. In this case, for all odd $\alpha>0$, we have in $\Bbb Q$ that 
\begin{equation}\label{3.11}
\sum_i\binom\alpha i(-1)^i\biggl[\binom{i-1-\frac{\alpha}2}\alpha 3^{2i}+\binom{i-\frac{\alpha+1}2}\alpha 3^{2i+1}\biggr]=0,
\end{equation}
which is equivalent \eqref{3.10}. The fact that $f_{q,1,2,a}$ (with $(-a)^{(q+1)/2}=3$) is a PP of $\f_{q^2}$ follows from \eqref{3.9} and \eqref{3.10}; the fact that $f_{q,3,2,a}$ (with $(-a)^{(q+1)/2}=1/3$) is a PP of $\f_{q^2}$ follows from \eqref{3.9} and \eqref{3.11}. Although \eqref{3.10} and \eqref{3.11} are easily seen to be equivalent, it is not clear how $f_{q,1,2,a}$ with $(-a)^{(q+1)/2}=3$ and $f_{q,3,2,a}$ with $(-a)^{(q+1)/2}=1/3$ are related.
\end{itemize}
\end{rmk}

%%%%%%%%%%%%%%%%%%%%%%%%%%%%%%%%%%%%%%%%%%%%%%%%%%%%%%%%%%%%%%%%%%%%%%%%%
%             section 4 
%%%%%%%%%%%%%%%%%%%%%%%%%%%%%%%%%%%%%%%%%%%%%%%%%%%%%%%%%%%%%%%%%%%%%%%%%
\section{The Case $t=1$}

Let $g=f_{q,r,1,a}=\x^r(a+\x^{q-1})$, where $r\ge 1$, $\text{gcd}(r,q-1)=1$, and $a\in\f_{q^2}^*$. The power sum of $g$ can be easily obtained by an adaptation of the computation in Section~3. 

Let $1\le s\le q^2-2$ be given in the form $s=\alpha+\beta q$, $0\le \alpha,\beta\le q-1$. If $\alpha+\beta\ne q-1$, we have $\sum_{x\in\f_{q^2}}g(x)^s=0$. When $\alpha+\beta=q-1$, comparing with \eqref{3.2}, we have 
\begin{equation}\label{4.1}
-\sum_{x\in\f_{q^2}}g(x)^s=\sum_{i-j-(\alpha+1)r\equiv0\,\text{(mod\,$q+1$)}}\binom\alpha i\binom{q-1-\alpha}ja^{(\alpha+1)(1-q)-(i+jq)}.
\end{equation}
Write
\begin{equation}\label{4.2}
(\alpha+1)-\alpha=c'(q+1)+d',\quad 0\le d'<q+1,\quad c'=\left\lceil\frac{(\alpha+1)r-\alpha}{q+1}\right\rceil.
\end{equation}
The conditions $0\le i\le \alpha$, $0\le j\le q-1-\alpha$ and $i-j-(\alpha+1)r\equiv 0\pmod{q+1}$ together imply that $i-j-(\alpha+1)r=-c'(q+1)$. Therefore \eqref{4.1} becomes
\[
\begin{split}
-\sum_{x\in\f_{q^2}}g(x)^s\,&=\sum_{i-j-(\alpha+1)r=-c'(q+1)}\binom\alpha i\binom{q-1-\alpha}ja^{(\alpha+1)(1-q)-(i+jq)}\cr
&=\sum_{\substack{i-j-(\alpha+1)r=-c'(q+1)\cr -\alpha\le j\le q-1}}\binom\alpha i\binom{\alpha+j}\alpha (-1)^ja^{(\alpha+1)(1-q)-(i+jq)}\cr
&=a^{(\alpha+1)(1-q)}\sum_{i-\alpha+d'\le q-1}\binom\alpha i\binom{i+d'}\alpha(-1)^{i-\alpha+d'}a^{-i-(i-\alpha+d')q}\cr
&=(-1)^{\alpha+d'}a^{\alpha+1-q(1+d')}\sum_{i-\alpha+d'\le q-1}\binom\alpha i\binom{i+d'}\alpha(-1)^ia^{-i(1+q)}.
\end{split}
\]
When $d'<q$, we have
\[
-\sum_{x\in\f_{q^2}}g(x)^s=(-1)^{\alpha+d'}a^{\alpha+1-q(1+d')}\sum_i\binom\alpha i\binom{i+d'}\alpha(-1)^ia^{-i(1+q)}.
\]
When $d'=q$, 
\[
-\sum_{x\in\f_{q^2}}g(x)^s=(-1)^{\alpha+1}a^{\alpha-q}\sum_{i\le \alpha-1}\binom\alpha i\binom{i}\alpha(-1)^ia^{-i(1+q)}=0.
\]
To summarize, we have the following proposition.

\begin{prop}\label{P4.1}
Assume that $\text{\rm gcd}(r,q-1)=1$. For $1\le s\le q^2-2$ written in the form $s=\alpha+\beta q$, where $0\le \alpha,\beta\le q-1$, we have
\begin{equation}\label{4.3}
\sum_{x\in\f{q^2}}g(x)^s=
\begin{cases}
\displaystyle (-1)^{\alpha+d'+1}a^{\alpha+1-q(1+d')}\sum_i\binom\alpha i\binom{i+d'}\alpha(-1)^ia^{-i(1+q)}\cr
\kern 5cm \text{if $\alpha+\beta=q-1$ and $d'<q$},\vspace{2mm}\cr
0\kern 4.8cm\text{otherwise}.
\end{cases}
\end{equation}
\end{prop}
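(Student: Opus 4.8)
The plan is to mimic the power-sum computation of Section~3 with $t=1$ in place of $t=2$. First I would start from $\sum_{x\in\f_{q^2}}g(x)^s$, where $g=\x^r(a+\x^{q-1})$ and $s=\alpha+\beta q$, split the exponent through Frobenius, and expand $(a+x^{q-1})^{\alpha+\beta q}$ by the binomial theorem exactly as in \eqref{3.1}. The resulting inner sum $\sum_{x\in\f_{q^2}^*}x^m$ vanishes unless $(q^2-1)\mid m$; since $\gcd(r,q-1)=1$, this forces $\alpha+\beta=q-1$ and thereby accounts for the ``otherwise'' value $0$. Assuming $\alpha+\beta=q-1$ and reducing the remaining divisibility condition modulo $q+1$, I would arrive at \eqref{4.1}, that is, at the single congruence $i-j-(\alpha+1)r\equiv0\pmod{q+1}$ on the summation indices; this is the $t=1$ counterpart of the condition $2(i-j)-(\alpha+1)r\equiv0$ in \eqref{3.2}.

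The decisive step, and the one place where $t=1$ is genuinely easier than $t=2$, is to show that this congruence selects at most one value of $i-j$. Under $0\le i\le\alpha$ and $0\le j\le q-1-\alpha$ the difference $i-j$ ranges over the $q$ consecutive integers in $[\alpha-q+1,\alpha]$; since $q<q+1$, these represent $q$ of the $q+1$ residue classes modulo $q+1$, each at most once, the unique unrepresented class being that of $\alpha+1$. Hence if $(\alpha+1)r\not\equiv\alpha+1\pmod{q+1}$, equivalently $d'<q$, there is exactly one surviving term, with $i-j-(\alpha+1)r=-c'(q+1)$ for the $c'$ of \eqref{4.2}; while if $(\alpha+1)r\equiv\alpha+1\pmod{q+1}$, equivalently $d'=q$, no term survives and the sum is $0$. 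By contrast, in Section~3 the factor $2$ stretches the range of $2(i-j)$ across nearly two periods, so two terms survive; this single-solution feature is also why \eqref{4.3} carries no parity restriction on $\alpha$.

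For $d'<q$ I would substitute the unique solution in the form $j=i-\alpha+d'$, apply the upper-negation identity $\binom{q-1-\alpha}{j}\equiv\binom{\alpha+j}{\alpha}(-1)^j\pmod p$ (valid on the range $-\alpha\le j\le q-1$, as noted after \eqref{3.4}), and collect terms. The power of $a$ separates into the $i$-free factor $a^{\alpha+1-q(1+d')}$ and the factor $a^{-i(1+q)}$, while the sign $(-1)^j=(-1)^{i-\alpha+d'}$ combines with the leading minus sign of \eqref{4.1} to produce $(-1)^{\alpha+d'+1}$; this is precisely \eqref{4.3}. I do not expect a genuine obstacle here: once the single-solution reduction is in place, all that remains is the bookkeeping of the $a$-exponents and the sign. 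The only point requiring care is the boundary $d'=q$, where the formal substitution yields $\binom{i+q}{\alpha}$; a Lucas-type congruence $\binom{i+q}{\alpha}\equiv\binom{i}{\alpha}\pmod p$ together with the surviving range $i\le\alpha-1$ makes every term vanish, reconfirming the value $0$ already dictated by the residue count.
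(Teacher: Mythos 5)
Your proposal is correct and follows essentially the same route as the paper's proof: expand the power sum as in \eqref{3.1}, use $\gcd(r,q-1)=1$ to force $\alpha+\beta=q-1$, reduce to the single congruence of \eqref{4.1}, observe that only the value $i-j-(\alpha+1)r=-c'(q+1)$ can occur, and substitute $j=i-\alpha+d'$ to collect the sign and the powers of $a$. The only cosmetic difference is that you justify the uniqueness of the surviving difference (and the vanishing when $d'=q$) by counting the residues of the $q$ consecutive integers $i-j\in[\alpha-q+1,\alpha]$ modulo $q+1$, whereas the paper sandwiches $i-j-(\alpha+1)r$ strictly between $(-c'-1)(q+1)$ and $(-c'+1)(q+1)$; both arguments are equivalent, and your Lucas-type remark $\binom{i+q}{\alpha}\equiv\binom{i}{\alpha}\pmod p$ is exactly how the paper disposes of the boundary case $d'=q$.
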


\begin{thm}\label{T4.2}
For $r\ge 1$ and $a\in\f_{q^2}^*$, $g=f_{q,r,1,a}$ is a PP of $\f_{q^2}$ if and only if $\text{\rm gcd}(r,q-1)=1$, $q+1\mid r-1$, and $a^{q+1}\ne 1$.
\end{thm}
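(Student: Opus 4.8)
The plan is to combine the power-sum formula of Proposition~\ref{P4.1} with the power-sum form of Hermite's criterion. Recall that a map $g:\f_{q^2}\to\f_{q^2}$ is a PP of $\f_{q^2}$ if and only if $\sum_{x\in\f_{q^2}}g(x)^s=0$ for every $s$ with $1\le s\le q^2-2$ and, in addition, $g$ has exactly one root in $\f_{q^2}$ (equivalently $\sum_x g(x)^{q^2-1}=-1$). I would treat these two conditions separately, since Proposition~\ref{P4.1} only controls the sums for $1\le s\le q^2-2$ and says nothing about the root count. Throughout I use that $\gcd(r,q-1)=1$ is a standing necessary condition: on $\f_q^*$ one has $x^{q-1}=1$, so $g$ restricts to $x\mapsto(a+1)x^r$, which is injective on $\f_q^*$ only if $\gcd(r,q-1)=1$ (the degenerate case $a=-1$ gives $a^{q+1}=1$ and is excluded below).

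First I would turn the vanishing of the power sums into an arithmetic condition on $r$. By \eqref{4.3}, $\sum_x g(x)^s=0$ automatically unless $\alpha+\beta=q-1$; writing such $s$ as $s=(q-1)(q-\alpha)$ with $0\le\alpha\le q-1$, the relevant quantity is $d'$, the residue of $\alpha-(\alpha+1)r$ modulo $q+1$ in $\{0,\dots,q\}$. Since the prefactor in \eqref{4.3} is a nonzero element of $\f_{q^2}$, the sum for a given $\alpha$ vanishes whenever $d'=q$ (the zero branch of \eqref{4.3}). To extract necessity I look at the single exponent $s=q^2-q$, i.e.\ $\alpha=0$: here the inner sum collapses to $\binom{d'}{0}=1$, so $\sum_x g(x)^{q^2-q}\ne0$ whenever $d'<q$, while for $\alpha=0$ one has $d'\equiv-r\pmod{q+1}$. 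Hence vanishing forces $d'=q$, i.e.\ $r\equiv1\pmod{q+1}$, which is the condition $q+1\mid r-1$. Conversely, if $q+1\mid r-1$ then $d'\equiv\alpha-(\alpha+1)=-1\equiv q\pmod{q+1}$ for \emph{every} $\alpha$, so $d'=q$ throughout and every sum with $1\le s\le q^2-2$ vanishes by \eqref{4.3}. Thus the power sums all vanish if and only if $q+1\mid r-1$.

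Next I would handle the root count, which supplies the condition $a^{q+1}\ne1$. One has $g(x)=0$ iff $x=0$ or $x^{q-1}=-a$; the equation $x^{q-1}=-a$ has a nonzero solution iff $-a$ lies in the image $\mu_{q+1}$ of $x\mapsto x^{q-1}$, i.e.\ iff $(-a)^{q+1}=1$, equivalently $a^{q+1}=1$ (in characteristic $2$ this is automatic, and for $q$ odd $(-1)^{q+1}=1$). When $a^{q+1}=1$ this contributes $q-1$ further roots, so $g$ has $q$ roots and is not a PP; when $a^{q+1}\ne1$ the only root is $x=0$. Therefore $g$ has a unique root iff $a^{q+1}\ne1$. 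Combining the two analyses with Hermite's criterion gives the stated equivalence: under $\gcd(r,q-1)=1$, $g$ is a PP of $\f_{q^2}$ iff $q+1\mid r-1$ and $a^{q+1}\ne1$.

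The routine part is the arithmetic with $d'$, and the only place needing care is conceptual rather than computational: because Proposition~\ref{P4.1} is silent about $s=q^2-1$, one must not conclude PP-ness from the vanishing of the power sums alone, but must separately establish the unique-root condition $a^{q+1}\ne1$. The second mild subtlety is recognizing that $d'=q$ is exactly the degenerate value for which the inner sum in \eqref{4.3} vanishes identically; this is what converts ``$d'=q$ for all $\alpha$'' into ``$q+1\mid r-1$'' and back, so I would verify the reduction $d'\equiv\alpha-(\alpha+1)r\pmod{q+1}$ against the $\alpha=0$ case before relying on it.
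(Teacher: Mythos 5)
Your proposal is correct and follows essentially the same route as the paper: necessity of $q+1\mid r-1$ from the $\alpha=0$ power sum via \eqref{4.3}, sufficiency from $d'=q$ for all $\alpha$ together with the unique-root condition $a^{q+1}\ne 1$, which the paper handles in the same (if more terse) way. The only difference is presentational: you spell out the Hermite-criterion bookkeeping for $s=q^2-1$ and the $\f_q^*$-restriction argument for $\gcd(r,q-1)=1$, which the paper takes as already established.
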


\begin{proof}
($\Rightarrow$) We already know that the necessary conditions include $\text{gcd}(r,q-1)=1$ and $(-a)^{q+1}\ne 1$, i.e., $a^{q+1}\ne 1$. Assume to the contrary that $q+1\nmid r-1$. Choose $\alpha=0$. Then by \eqref{4.2}, $d'\ne q$, and hence \eqref{4.3} gives $\sum_{x\in\f_{q^2}}g(x)^{(q-1)q}\ne 0$, which is a contradiction.

($\Leftarrow$) Since $a^{q+1}\ne 1$, it follows that $0$ is the only root of $g$ in $\f_{q^2}$. Since $r\equiv 1\pmod{q+1}$, we have $d'=q$ in \eqref{4.2}. Thus by \eqref{4.3}, $\sum_{x\in\f_{q^2}}g(x)^s=0$ for all $1\le s\le q^2-2$. Hence $g$ is a PP of $\f_{q^2}$.
\end{proof}

%%%%%%%%%%%%%%%%%%%%%%%%%%%%%%%%%%%%%%%%%%%%%%%%%%%%%%%%%%%%%%%%%%%%%%%%%
%             section 5 
%%%%%%%%%%%%%%%%%%%%%%%%%%%%%%%%%%%%%%%%%%%%%%%%%%%%%%%%%%%%%%%%%%%%%%%%%
\section{Proof of Theorem~\ref{T2.1} with $p>3$ and $r\not\equiv 3,\ 7/4\pmod p$}

Recall that $r$ and $q$ are both odd, $r>3$, and $a\in\f_{q^2}^*$ is such that $a^{q+1}\ne 1$. We assume that $q\ge 6r-11$, $p>3$, and $r\not\equiv 3,\ 7/4\pmod p$.

Assume to the contrary that $f=f_{q,r,2,a}$ is a PP of $\f_{q^2}$.

Since $q\ge 6r-11$, in \eqref{3.3} we have
\begin{equation}\label{5.1}
c=\left\lceil\frac{(\alpha+1)r-2\alpha}{q+1}\right\rceil=1 \quad\text{for}\ 1\le \alpha\le 5.
\end{equation}
By Remark~\ref{R3.2} (i), $d\ne q-1$ whenever $c=1$. For $\alpha=1,3,5$, \eqref{3.3} gives
\begin{equation}\label{5.2}
d=q+1+2\alpha-(\alpha+1)r.
\end{equation}
By \eqref{3.9} and \eqref{5.2}, for $\alpha=1,3,5$,
\begin{equation}\label{5.3}
\begin{split}
\Theta(\alpha):\,&=\sum_i\binom\alpha i(-1)^i\biggl[\binom{i+\frac 12+\alpha-\frac12(\alpha+1)r}\alpha z^{2i}+\binom{i+1+\alpha-\frac12(\alpha+1)r}\alpha z^{2i+1}\biggr]\cr
&=0,
\end{split}
\end{equation}
where $z$ is given in \eqref{3.6}. The expression $\Theta(\alpha)$ is a polynomial in $z$ with coefficients in $\Bbb Z[1/2]$. In fact,
\begin{align}
\label{5.4}
\Theta(1)&=\frac 12(1+z)A_1(r,z),\\
\label{5.5}
\Theta(3)&=\frac 1{2^4\cdot 3}(1+z)A_3(r,z),\\
\label{5.6}
\Theta(3)&=\frac 1{2^8\cdot 5}(1+z)A_5(r,z),
\end{align}
where
\begin{equation}\label{5.7}
A_1(\rr,\z)=(2 \rr-6) \z^2+\z-2\rr+3,
\end{equation}
\begin{equation}\label{5.8}
\begin{split}
A_3(\rr,\z)=\,&(64 \rr^3-576 \rr^2+1712
   \rr-1680) \z^6+ (48 \rr^2-276 \rr+393) \z^5 \cr
& + (-192 \rr^3+1392 \rr^2-3276 \rr+2487)\z^4 +(-96 \rr^2+408 \rr-408)\z^3 \cr
&+ (192 \rr^3-1056 \rr^2+1848 \rr-1032) \z^2 + (48 \rr^2-132 \rr+87) \z\cr
&-64 \rr^3 +240 \rr^2 -284 \rr+105,
\end{split}
\end{equation}
\begin{equation}\label{5.9}
\begin{split}
A_5(\rr,\z)=\,
& (2592 \rr^5-38880 \rr^4+231840 \rr^3-686880 \rr^2+1011008
   \rr-591360) \z^{10}\cr
&+(2160 \rr^4-25200 \rr^3+109560
   \rr^2-210350 \rr+150465) \z^9\cr
&+(-12960 \rr^5+170640 \rr^4-889200 \rr^3+2290440\rr^2-2913490 \rr+1462335)\z^8\cr
&+ (-8640 \rr^4+86400 \rr^3-319440
   \rr^2+516600 \rr-307610) \z^7\cr
&+(25920 \rr^5-293760 \rr^4+1310400\rr^3-2872560 \rr^2+3091080 \rr-1305190) \z^6\cr
&+ (12960 \rr^4-108000 \rr^3+329760
   \rr^2-436500 \rr+211240) \z^5\cr
&+(-25920 \rr^5+246240\rr^4-914400 \rr^3+1657440 \rr^2-1465580 \rr+505560) \z^4\cr
&+ (-8640 \rr^4+57600 \rr^3-139440
   \rr^2+145400 \rr-55110) \z^3\cr
&+(12960\rr^5-99360 \rr^4+295200 \rr^3-424560 \rr^2+295240 \rr-79290)\z^2 \cr
&+ (2160 \rr^4-10800 \rr^3+19560 \rr^2-15150
   \rr+4215) \z\cr
&-2592 \rr^5+15120 \rr^4-33840 \rr^3+36120 \rr^2 -18258\rr+3465.
\end{split}
\end{equation}
Note that $A_1(r,\z),\frac 13A_3(r,\z),\frac 15A_5(r,\z)\in\Bbb Z[\z]$. By assumption, $z\ne -1$. Hence $z$ is a common root of $A_1(r,\z)$, $\frac 13A_3(r,\z)$ and $\frac 15A_5(r,\z)$ in $\f_{q^2}$.

For $i,j\in\{1,3,5\}$, $i<j$, let $R_{ij}$ denote the resultant of $\frac 1iA_i(r,\z)$ and $\frac 1jA_j(r,\z)$ treated as polynomials in $\Bbb Z[\z]$. With computer assistance, we find that
\begin{align}
\label{5.10}
R_{1,3}&=-\frac{2^9}{3^2}(r-3)^2(4r-7)^2h_{1,3}(r),\\
\label{5.11}
R_{1,5}&=-\frac{2^{13}}{5^2}(r-3)^2(2r-3)h_{1,5}(r),\\
\label{5.12}
R_{3,5}&=-\frac{2^{43}}{3^8\cdot 5^6}(r-3)^2h_{3,5}(r),
\end{align}
where $h_{1,3}, h_{1,5}, h_{3,5}\in\Bbb Z[\rr]$ are given below:
\begin{equation}\label{5.13}
h_{1,3}=8 \rr^2-32 \rr+23,
\end{equation}
\begin{equation}\label{5.14}
\begin{split}
h_{1,5}=\,&417792 \rr^7-6220800 \rr^6+38904064 \rr^5-132226368 \rr^4+263268784 \rr^3\cr
&-306413232 \rr^2+192510160 \rr-50177175,
\end{split}
\end{equation}
\begin{equation}\label{5.15}
\begin{split}
h_{3,5}=\,&
21119053438918950050070528 \rr^{28}\cr
&-1217802457851859262370742272
   \rr^{27} \cr & +33695682531771885297793499136
   \rr^{26} \cr & -595640449348053724576692043776
   \rr^{25} \cr & +7555969260252555507865718095872
   \rr^{24} \cr & -73248462876723300946488091213824
   \rr^{23} \cr & +564228757279539380358831153348608
   \rr^{22} \cr & -3545224885397742156794256357851136
   \rr^{21} \cr & +18509319909682455299397692929605632
   \rr^{20} \cr & -81379894636486495421006534236176384
   \rr^{19} \cr & +304297772497625768143155803975057408
   \rr^{18} \cr & -974678059666820944936074552648400896
   \rr^{17} \cr & +2688005876401609920053983363863560192
   \rr^{16} \cr & -6404564332483459115509239563149737984
   \rr^{15} \cr & +13209244119542504384062885644435258368
   \rr^{14}\cr
&-23596172317266885038522124141212199936\rr^{13} \cr 
& +36479664536657352839953925385556203392\rr^{12}\cr
&-48706132767092416541853122916769684224\rr^{11} \cr 
& +55959309692846308509760642138106884928\rr^{10}\cr 
&-55030900064677544182509145667872622016\rr^9 \cr 
& +45980684429130187438483339370188443584\rr^8\cr 
&-32316001910874766059468388718091396312\rr^7 \cr 
& +18846211417895804224301626730504310302\rr^6\cr
&-8951174935307409932529759009356097240\rr^5 \cr 
& +3372192212650154034800139553730275800\rr^4\cr
&-968910653712017064601924894849677750\rr^3 \cr 
& +199340494276328696648165448026683125\rr^2\cr 
&-26137880501033434757380449712031250\rr \cr 
& +1640196174434693231689160015671875.
\end{split}
\end{equation}
By assumption, $(r-3)(4r-7)\not\equiv 0\pmod p$. For the moment, also assume that $2r-3\not\equiv 0\pmod p$. Then $r$ is a common root of $h_{1,3}$, $h_{1,5}$ and $h_{3,5}$ (in $\f_p$). Hence $\text{Res}_{\f_p[\rr]}(h_{1,3},h_{1,5})=\text{Res}_{\f_p[\rr]}(h_{1,3},h_{3,5})=0$, where $\text{Res}_{\f_p[\rr]}(\cdot\,,\,\cdot)$ denotes the resultant of two polynomials in $\f_p[\rr]$. On the other hand, direct computation (with computer assistance gives
\begin{equation}\label{5.16}
\text{Res}_{\Bbb Z[\rr]}(h_{1,3},h_{1,5})=2^{20}\cdot 3^{4}\cdot 23\cdot 8681
\end{equation}
and
\begin{equation}\label{5.17}
\text{Res}_{\Bbb Z[\rr]}(h_{1,3},h_{3,5})=2^{65}\cdot 3^{18}\cdot 7\cdot 41\cdot
185871968716987252172951795997086716801
\end{equation}
in prime factorization. Since $p\ne 2,3$, $\text{Res}_{\Bbb Z[\rr]}(h_{1,3},h_{1,5})$ and $\text{Res}_{\Bbb Z[\rr]}(h_{1,3},h_{3,5})$ cannot be both $0$ in $\f_p$, which is a contradiction.

Now assume $r\equiv 3/2\pmod p$. By \eqref{5.7} -- \eqref{5.9},
\[
A_1\Bigl(\frac 32,\,\z\Bigr)=-\z(3\z-1),
\]
\[
A_3\Bigl(\frac 32,\,\z\Bigr)=-3 (64 \z^6-29 \z^5-19 \z^4+4 \z^3-4 \z^2+\z-1),
\]
\[
A_5\Bigl(\frac 32,\,\z\Bigr)=-5 \z(3003 \z^9-1467 \z^8-1998 \z^7+718 \z^6-88 \z^5+88 \z^4-18 \z^3+18\z^2-3 \z+3).
\]
Recall that $z$ is a common root of the above polynomials. Thus $z=1/3$. However,
\[
A_3\Bigl(\frac 32,\,\frac 13\Bigr)=\frac{2^7\cdot 7}{3^5},\qquad A_5\Bigl(\frac 32,\,\frac 13\Bigr)=-\frac{2^{11}\cdot 5\cdot 13}{3^9},
\]
which cannot be both $0$ (in $\f_p$). So we have a contradiction.

%%%%%%%%%%%%%%%%%%%%%%%%%%%%%%%%%%%%%%%%%%%%%%%%%%%%%%%%%%%%%%%%%%%%%%%%%
%             section 6 
%%%%%%%%%%%%%%%%%%%%%%%%%%%%%%%%%%%%%%%%%%%%%%%%%%%%%%%%%%%%%%%%%%%%%%%%%
\section{Proof of Theorem~\ref{T2.1} with $r\equiv 3\pmod p$}

In addition to the common conditions in Theorem~\ref{T2.1}, we assume that $q\ge r^2-4r+5$ and $r\equiv 3\pmod p$. Assume to the contrary that $f=f_{q,r,2,a}$ is a PP of $\f_{q^2}$. 

By \eqref{5.7}, $A_1(r,\z)=\z-3$, and hence $z=3$. In this case, $z=3$ is a common root of $A_\alpha(r,\z)$ for $\alpha=1,3,5$, so \eqref{5.7} -- \eqref{5.9} produce no contradiction. In fact, for $r\equiv 3\pmod p$ and $z=3$, no contradiction can be derived from \eqref{3.9} with $\alpha<p$. A special value of $\alpha$ needs to be considered.

Write $r=kp^l+3$, where $k,l>0$, $p\nmid k$. Choose $\alpha=p^l$. Then in \eqref{3.3},
\[
c=\left\lceil\frac{(r-2)\alpha+r}{q+1}\right\rceil\le\left\lceil\frac{(r-2)(r-3)+r}{q+1}\right\rceil=\left\lceil\frac{r^2-4r+6}{q+1}\right\rceil\le 1.
\]
So $c=1$ and
\begin{equation}\label{6.1}
\frac d2=\frac{q+1}2+\alpha-r\frac{\alpha+1}2=\frac{q+1}2+p^l-r\frac{p^l+1}2.
\end{equation}
It is clear from \eqref{6.1} that $d<q-1$. Since
\[
\binom\alpha i=
\begin{cases}
1&\text{if $i=0$ or $p^l$},\cr
0&\text{otherwise},
\end{cases}
\]
\eqref{3.9} gives
\[
0=\binom{\frac d2}{p^l}+\binom{\frac d2+\frac 12}{p^l}z-\binom{p^l+\frac d2}{p^l}z^{2p^l}-\binom{p^l+\frac d2+\frac 12}{p^l}z^{2p^l+1}.
\]
By \eqref{6.1}, the above equation becomes 
\begin{equation}\label{6.2}
\binom{u_1-r\frac{p^l+1}2}{p^l}+\binom{u_2-r\frac{p^l+1}2}{p^l}z-\binom{u_3-r\frac{p^l+1}2}{p^l}z^{2p^l}-\binom{u_4-r\frac{p^l+1}2}{p^l}z^{2p^l+1}=0,
\end{equation}
where $u_1=\frac 12+p^l$, $u_2=1+p^l$, $u_3=\frac12+2p^l$, $u_4=1+2p^l$. Since $z=3$, by \eqref{3.11}, we know that \eqref{6.2} also holds with $r$ replaced by $3$, that is, 
\begin{equation}\label{6.3}
\binom{u_1-3\frac{p^l+1}2}{p^l}+\binom{u_2-3\frac{p^l+1}2}{p^l}z-\binom{u_3-3\frac{p^l+1}2}{p^l}z^{2p^l}-\binom{u_4-3\frac{p^l+1}2}{p^l}z^{2p^l+1}=0.
\end{equation}
For $1\le i\le 4$, since
\[
u_i-3\frac{p^l+1}2-\Bigl(u_i-r\frac{p^l+1}2\Bigr)=kp^l\frac{p^l+1}2\equiv\frac k2p^l\pmod{p^{l+1}},
\]
we have
\[
\binom{u_i-3\frac{p^l+1}2}{p^l}-\binom{u_i-r\frac{p^l+1}2}{p^l}\equiv \frac k2\pmod p.
\]
Thus, subtracting \eqref{6.2} from \eqref{6.3} gives
\[
0=\frac k2\bigl(1+z-z^{2p^l}-z^{2p^l+1}\bigr)=\frac k2(1+z)^{p^l+1}(1-z)^{p^l}.
\]
Therefore $z=\pm 1$, which is a contradiction.

%%%%%%%%%%%%%%%%%%%%%%%%%%%%%%%%%%%%%%%%%%%%%%%%%%%%%%%%%%%%%%%%%%%%%%%%%
%             section 7 
%%%%%%%%%%%%%%%%%%%%%%%%%%%%%%%%%%%%%%%%%%%%%%%%%%%%%%%%%%%%%%%%%%%%%%%%%
\section{Proof of Theorem~\ref{T2.1} with $r\not\equiv 3\pmod p$\\ and Either $p=3$ or $r\equiv 7/4\pmod p$}

We assume that $q\ge 8r-15$, $r\not\equiv 3\pmod p$, and either $p=3$ or $r\equiv 7/4\pmod p$. Again, assume to the contrary that $f=f_{q,r,2,a}$ is a PP of $\f_{q^2}$.

\subsection{The case $p=3$}\label{s7.1}\

In this case, \eqref{5.7} gives $A_1(r,\z)=-r\z^2+\z+r$. Recall that $\frac 13A_3(r,\z)\in\Bbb Z[\z]$. Write $r\equiv r_0\pmod{3^2}$, where $1\le r_0\le 8$, $3\nmid r_0$. Then $\frac 13A_3(r,\z)\equiv\frac 13 A_3(r_0,\z)\pmod 3$. Therefore,
\[
\text{Res}_{\f_3[\z]}\Bigl(A_1(r,\z),\,\frac 13A_3(r,\z)\Bigr)=\text{Res}_{\f_3[\z]}\Bigl(A_1(r_0,\z),\,\frac 13A_3(r_0,\z)\Bigr),
\]
which is obtained by setting $r=r_0$ in \eqref{5.10}. It turns out that
\[
\text{Res}_{\f_3[\z]}\Bigl(A_1(r,\z),\,\frac 13A_3(r,\z)\Bigr)=
\begin{cases}
0&\text{if}\ r_0=4,\cr
1&\text{if}\ r_0=5,8,\cr
-1&\text{if}\ r_0=1,2,7.
\end{cases}
\]
It is easy to verify that $A_1(4,\z) =-\z^2+\z+1$ divides both $\frac 13A_3(4,\z)$ and $A_5(4,\z)$ in $\f_3[\z]$. (In fact, $\frac 13A_3(4,\z)=\z(\z^2-\z-1)(\z^3-\z^2-\z-1)$ and $\frac 15A_5(4,\z)=\z^2(\z+1)^2(\z^2-\z-1)$.) This means that the consideration of the sums $\Theta(\alpha)$ in \eqref{5.4} -- \eqref{5.6} with $\alpha=1,3,5$ produces no contradiction.

To extract additional information, we consider \eqref{3.9} with $\alpha=7$. Clearly, $q\ge 8r-15\ge 3^2$. In \eqref{3.3}, we have 
\begin{equation}\label{7.1}
c=\left\lceil\frac{(\alpha+1)r-2\alpha}{q+1}\right\rceil=\left\lceil\frac{8r-14}{q+1}\right\rceil
\end{equation}
and 
\begin{equation}\label{7.2}
\frac d2=c\frac{q+1}2+\alpha-r\frac{\alpha+1}2\equiv\frac c2+7-4\cdot\frac{7+1}2=\frac c2\pmod{3^2}.
\end{equation}
Since $q\ge 8r-15$, \eqref{7.1} gives $c=1$. Note that $d\ne q-1$. Now \eqref{3.9} with $\alpha=7$ gives
\begin{equation}\label{7.3}
\Theta(7):=\sum_i\binom 7i(-1)^i\biggl[\binom{i+\frac c2}7z^{2i}+\binom{i+\frac{c+1}2}7z^{2i+1}\biggr]=0.
\end{equation}
On the other hand, direct computation shows that
\begin{equation}\label{7.4}
\Theta(7):=z^6(z+1)A_7(z),
\end{equation}
where
\[
A_7(\z)=(\z^3-\z^2-\z-1)(\z^5-\z^2+\z+1).
\]
We must have $A_7(z)=0=A_1(4,z)=0$ in $\f_3$. However,
\[
\text{gcd}_{\f_3[\z]}\bigl(A_1(4,\z),A_7(\z)\bigr)=1,
\]
which is a contradiction.

\begin{rmk}\label{R7.1}\rm 
If $q<8r-15$, then for $\alpha=7$, we have $c=2$. In this case, \eqref{7.4} becomes
\begin{equation}\label{7.5}
\Theta(7)=z^3 (z+1)(z^2-z-1) (z^8+z^5-z^3+z^2+z+1).
\end{equation}
Since $A_1(4,z)=-z^2+z+1$ appears in \eqref{7.5} as a factor, no contradiction is reached.
\end{rmk}

\subsection{The case $r\equiv 7/4\pmod p$}\

We may assume that $p>3$ because of Subsection~\ref{s7.1}. If $p=5$, then $r\equiv 3\pmod p$, which is false. Therefore, we assume that $p>5$.

When $r\equiv 7/4\pmod p$, \eqref{5.7} -- \eqref{5.9} in $\f_p[\z]$ becomes
\begin{align}
\label{7.6}
A_1(r,\z)\,&=\frac{1}{2} (-5 \z^2+2 \z-1),\\
\label{7.7}
A_3(r,\z)\,&=
-3 \z (5 \z^2-2 \z+1)(7 \z^3-\z^2-\z-1),\\
\label{7.8}
A_5(r,\z)\,&=-\frac 5{2^5}B_5(\z),
\end{align}
where
\begin{equation}\label{7.9}
\begin{split}
B_5(\z)=\,& 33649 \z^{10}-19726 \z^9-2219 \z^8-1096 \z^7-1214 \z^6 \cr
&+44 \z^5-814 \z^4+184\z^3-499 \z^2+114 \z-231.
\end{split}
\end{equation}
We find that 
\[
\text{Res}_{\Bbb Z[\z]}\bigl(-5\z^2+2\z-1,\, B_5(\z)\bigr)=2^{27}\cdot 3^2\cdot 181.
\]
Thus we must have $p=181$. In $\f_{181}[\z]$, \eqref{7.6} -- \eqref{7.8} become
\begin{align*}
A_1(r,\z)=\,&88 (\z+116) (\z+137),\\
A_3(r,\z)=\,&76 \z (\z+116) (\z+137) (\z+159) (\z^2+177 \z+67),\\
A_5(r,\z)=\,&178 (\z+116) (\z+142)\\
&(\z^8+8 \z^7+163 \z^6+69 \z^5+68 \z^4+165 \z^3+62
   \z^2+33 \z+152),
\end{align*}
where the factors in the above are all irreducible in $\f_{181}[\z]$. Since $z$ is a common root of $A_1(r,\z)$, $A_3(r,\z)$ and $A_5(r,\z)$, we must have $\z=-116=65$.

As in Subsection~\ref{s7.1}, again we need additional information from \eqref{3.9} with $\alpha=7$. Since $q\ge 8r-15$, in \eqref{3.3}, we have $c=1$ and 
\begin{equation}\label{7.10}
\frac d2\equiv \frac c2+7-\frac 74\cdot\frac 82=\frac 12\pmod q.
\end{equation}
Note that $d\ne q-1$ and $q\ge 8r-15>7$. Thus \eqref{3.9} (with $\alpha=7$) and \eqref{7.10} imply that 
\[
\Theta(7):=\sum_i\binom 7i(-1)^i\biggl[\binom{i+\frac 12}7 65^{2i}+\binom{i+1}7 65^{2i+1}\biggr]=0
\]
in $\f_{181}$. However, direct computation gives
\[
\Theta(7)=46\ne 0,
\]
which is a contradiction.

The proof of Theorem~\ref{T2.1} is now complete.

\end{document}